\title{An operadic proof of Baez-Dolan Stabilization Hypothesis}
\date{November 16, 2015}
\author{M. A.  Batanin}
\subjclass{18D20 , 18D50, 55P48}
\def\End{\mathcal{E}nd}
\def\Ss{\mathcal{S}}
\def\NN{\mathtt{FinSet}}
\def\Ee{\mathcal{E}}
\def\uEe{\underline{\mathcal{E}}}
\def\Set{\mathrm{Set}}
\def\FinSet{\mathrm{FinSet}}
\def\Poly{\mathbf{Poly}}
\def\hocolim{\mathrm{hocolim}}
\def\holim{\mathrm{holim}}
\def\RR{\mathbb{R}}
\theoremstyle{plain}
\newtheorem{theorem}[subsection]{Theorem}
\newtheorem{hypothesis}[subsection]{Hypothesis}
\newtheorem{defin}[subsection]{Definition}
\newtheorem{pro}[subsection]{Proposition}
\newtheorem{lem}[subsection]{Lemma}
\newtheorem{corol}[subsection]{Corollary}
\theoremstyle{remark}
\newtheorem{remark}[subsection]{Remark}
\newtheorem{example}[subsection]{Example}
\newcommand{\HS}{\mbox{${\bf T}^{\scriptstyle \tt S}$}}
\newcommand{\srk}{\mbox{${\bf SO}_k^{\scriptstyle \tt O(n)}$}}
\newcommand{\sr}{\mbox{${\bf SO}^{\scriptstyle \tt O(n)}$}}
\newcommand{\sri}{\mbox{${\bf SO}^{\scriptstyle \tt O(\infty)}$}}
\newcommand{\ORTrees}{\mbox{${\mathtt{OrderedRootedTrees}}$}}
\newcommand{\nPTrees}{\mbox{${\mathtt{nPlanarRootedTrees}}$}}
\begin{document}

\setcounter{tocdepth}{1}

\begin{abstract} We prove a stabilization theorem for algebras of $n$-operads in a monoidal model category $\Ee.$ It implies a version of  Baez-Dolan stabilization hypothesis for Rezk's weak $n$-categories and some other stabilization results.
\end{abstract}

\maketitle

{\small \tableofcontents}

%1991 Math. Subj. Class.  18D20 , 18D50, 55P48     
\maketitle
{\small \tableofcontents}

\section{Introduction}

Breen \cite{breen} and later Baez and Dolan \cite{BD} suggested the following {\it stabilization hypothesis} in higher category theory
\begin{hypothesis} The category of $n$-tuply monoidal $k$-categories is equivalent to the category of   $(n+1)$-tuply monoidal $k$-categories provided $n\ge k+2.$ \end{hypothesis}   

Baez and Dolan  define $n$-tuply monoidal $k$-category as a weak  $n+k$-category  which   
has only  one cell in each dimension smaller  then $n.$  It is known that such a definition, if taken naively, is not completely satisfactory because even if there is a unique cell in lower dimension the action of higher coherence cells  associated to it can be nontrivial (see \cite{Cheng} for a discussion). 

To get rid of this problem we have to work with weakest possible morphisms of $k$-categories.  Moreover, we want to be able to speak about monoidal structures on $k$-categories. One technically convenient way to do it is to choose a
symmetric monoidal model category $(\Ee,\otimes,e)$ whose homotopy category is equivalent to the homotopy category  of weak $k$-categories and weak $k$-functors. For example, for $k=1$ one can take the category of categories with cartesian product and `folklore' model structure and for $k=2$ one can consider the category of $2$-categories with Gray-product as tensor product  and Lack's model structure \cite{Lack}. For any  $k\ge 0$  the  category of $\Theta_k$ simplicial presheaves $\Theta_k Sp_k$  with Rezk model structure satisfies this requirement \cite{BergnerRezk,Rezk}. There is a widely-accepted understanding that a monoidal $k$-category is just an $E_1$-algebra  in such a monoidal model category $\Ee$ \cite{HA}.

Now, an $n$-tuply monoidal weak $k$-category must have $n$ monoidal structures which interact coherently.
 It is another widely-accepted idea  that such  an interaction of   structures is equivalent to an action of an $E_n$-operad \cite{HA}.   This is justified by an additivity theorem for $E_n$-operads (\cite{HA}[Theorem 5.1.1.2]):  the   tensor product of $E_n$-operad and an $E_m$-operad is an $E_{n+m}$-operad. All operads here have to be understood as $\infty$-operads and tensor product  is a `derived' Boardman-Vogt tensor product.  The statement and  proof of this theorem are subtle  because of a  tricky homotopy behaviour of   Boardman -Vogt tensor product  \cite{FV}. 
 Lurie's Additivity  Theorem holds only in fully homotopised world. As a consequence the stabilisation result (\cite{HA}[Example 5.1.2.3]) gives an equivalence of $(\infty,1)$-categories rather than Quillen equivalence of model categories of algebras\footnote{Lurie formulated his argument in the context of $(n,1)$-categories but observed  that it can be extended to a more general context of $(n,k)$-categories. This has been done by Gepner and Haugseng in \cite{GH}.}\footnote{A weaker version of stabilisation hypothesis  was earlier proved by Simpson in \cite{Simpson}.}. 
  
In this paper we choose a  different approach to coherent interaction of  monoidal structures which comes closer to the  original Baez-Dolan understanding of $n$-tuply monoidal $k$-category as a degenerate $(n+k)$-category\footnote{Gepner and Haugseng \cite{GH}  show that such an interpretation is also  possible using their weak enrichment approach.}  and does not require Additivity Theorem (though we conjecture that Additivity Theorem can be proved using our techniques).  {\it An $n$-tuply monoidal $k$-category for us is an algebra of  a cofibrant contractible  $n$-operad in $\Ee.$} 

For $n=1$ this means that a monoidal $k$-category is an algebra of a cofibrant nonsymmetric contractible operad, which is well known to be homotopy equivalent to  an  $E_1$-algebra structure. 
This simple observation was extended in  \cite{SymBat} to arbitrary dimension.  It is shown here  that the derived symmetrisation functor on a terminal $n$-operad is an $E_n$-operad and, therefore, the homotopy category of algebras of cofibrant contractible $n$-operads is equivalent to the homotopy category of $E_n$-algebras. So, homotopically   both approaches   are equivalent.  The difference is that our point of view allows to avoid Boardman-Vogt tensor product and $\infty$-operads.  This has some  advantage as we can use classical operadic and model theoretic methods and  final result is formulated in terms of Quillen equivalences, which is a stronger statement.  Also our techniques allows to prove stabilisation not only for weakly unital $k$-categories but for   nonunital algebras also. It is known that Additivity Theorem fails in this case even for cofibrant operads \cite{FV}[Section 3].  Our proof is essentially the same in all cases, we just need to choose an appropriate category of $n$-operads.

\section{Higher operads and symmetrization}\label{higheroperads}

\subsection{$n$-ordinals and $n$-operads}
Let $n \geq 0$.
Recall~\cite[Sec.~II]{SymBat} that a {\em $n$-ordinal\/} is a 
finite set $T$  equipped with $n$ binary relations  $<_0, \ldots, <_{n-1}$
such that 
\begin{itemize}
\item[(i)] 
 $<_p$ is nonreflexive,
\item[(ii)] 
for every pair $a,b$ of distinct elements of $T$ there
exists exactly one $p$ such that
\[
a<_p b \ \ \mbox{or} \ \ b<_p a,
\]
\item[(iii)] 
if $a<_p b $  and  $b<_q c$  then
$a<_{\min(p,q)} c$.
\end{itemize}
A morphism of $n$-ordinals
$\sigma: T \rightarrow S$ is a map 
of the underlying sets such that  
$i<_p j$ in $T$  implies that
\begin{itemize}
\item[(i)]  
$\sigma(i) <_r \sigma(j)$ for some $r\ge p$, or
\item [(ii)]  
$\sigma(i)= \sigma(j)$, or
\item [(iii)]
$\sigma(j) <_r \sigma(i)$ for $r>p$.
\end{itemize}

Let $Ord(n)$ be the skeletal category of $n$-ordinals and their morphisms. 
Each $n$-ordinal can be represented as a pruned planar rooted tree with $n$ levels (\emph{pruned $n$-tree} for short), cf. \cite[Theorem 2.1]{SymBat}. For example, the $2$-ordinal
\[
0 <_0 2,\ 0 <_0 3,\ 0 <_0 4,\ 1 <_0 2,\  1<_0 3,\ 1 <_0 4,\ 
0 <_1 1,\ 2 <_1 3,\ 2 <_1 4,\ 3 <_1 4, 
\]
is represented by the following pruned tree

\begin{figure}[h]
\[
\psscalebox{.4 .4} % Change this value to rescale the drawing.
{
\begin{pspicture}(0,-1.7822068)(6.81,1.7822068)
\psline[linecolor=black, linewidth=0.05](1.06,-0.35779327)(3.06,-1.7577933)(5.06,-0.35779327)(4.96,-0.35779327)
\psline[linecolor=black, linewidth=0.05](0.06,1.0422068)(1.06,-0.35779327)(2.06,1.0422068)(2.06,1.0422068)
\psline[linecolor=black, linewidth=0.05](3.46,1.0422068)(5.06,-0.35779327)(6.66,1.0422068)(6.66,1.0422068)
\psline[linecolor=black, linewidth=0.05](5.06,1.0422068)(5.06,-0.35779327)(5.06,-0.35779327)
\rput[b](0.06,1.5422068){\Huge 0}
\rput[b](2.06,1.5422068){\Huge 1}
\rput[b](3.46,1.5422068){\Huge 2}
\rput[b](5.06,1.5422068){\Huge 3}
\rput[bl](6.66,1.5422068){\Huge 4}
\end{pspicture}
}\]

\caption{\label{symbol}}
\end{figure}

The initial $n$-ordinal $z^nU_0$ has empty underlying set and its representing pruned $n$-tree is degenerate: it has no edges but consists only of the root at level $0$. The terminal $n$-ordinal $U_n$ is represented by a linear tree with $n$ levels.

We also would like to consider the limiting case of $\infty$-ordinals.

\begin{defin}Let $T$ be a finite set equipped with a sequence of  binary antireflexive  complimentary relations  $<_0,<_{-1} \ldots, <_p , <_{p-1} \ldots   $ for all integers $p\le 0.$ The set
  $T$  is called an $\infty$-ordinal if these relations satisfy:
  \begin{itemize}
 \item \  $a<_p b $ \ and \ $b<_q c$ \ implies\
 $a<_{min(p,q)} c .$
 \end{itemize}
 \end{defin}

The definition of morphism between $\infty$-ordinals coincides with the definition of morphism between $n$-ordinals for finite $n.$
The category $Ord(\infty)$ denotes {\it the skeletal category of $\infty$-ordinals .}

For an $n$-ordinal $R$ we consider its  {\it vertical suspension} $S(R)$ which is an $(n+1)$-ordinal with the underlying set $R ,$ and the order $<_m $ equal the  order $<_{m-1}$ on $R$     $<_0$ is empty. 

For example, a vertical suspension of the $2$-ordinal from Figure \ref{symbol} is the $3$-ordinal

% Generated with LaTeXDraw 2.0.8
% Sat Jul 11 17:03:40 EST 2015
% \usepackage[usenames,dvipsnames]{pstricks}
% \usepackage{epsfig}
% \usepackage{pst-grad} % For gradients
% \usepackage{pst-plot} % For axes
\[
\scalebox{0.4} % Change this value to rescale the drawing.
{
\begin{pspicture}(0,-2.795)(7.237695,2.81)
\psline[linewidth=0.05](1.1882031,0.13)(3.188203,-1.2646877)(5.1882033,0.13)(5.088203,0.13)
\psline[linewidth=0.05](0.18820313,1.5353125)(1.1882031,0.13531242)(2.188203,1.5353125)
\psline[linewidth=0.05](3.5882032,1.5353125)(5.1882033,0.13531242)(6.7882032,1.5353125)
\psline[linewidth=0.05](5.18,1.63)(5.18,0.11)
\usefont{T1}{ptm}{m}{n}
\rput(0.1566211,2.4353125){\Huge 0}
\usefont{T1}{ptm}{m}{n}
\rput(2.0829296,2.4353125){\Huge 1}
\usefont{T1}{ptm}{m}{n}
\rput(3.5604882,2.4353125){\Huge 2}
\usefont{T1}{ptm}{m}{n}
\rput(5.1338477,2.4353125){\Huge 3}
\usefont{T1}{ptm}{m}{n}
\rput(6.9860744,2.4353125){\Huge 4}
\psline[linewidth=0.05](3.18,-1.25)(3.18,-2.77)
\end{pspicture} 
}\]

More generally, one can consider a  $p$-suspension $S_p$ where we trivialise the orders $<_p.$ So, the vertical suspension is 
$S = S_0.$

For example, the suspension $S_2$  of the $2$-ordinal from Figure \ref{symbol} is 

% Generated with LaTeXDraw 2.0.8
% Sat Jul 11 17:10:38 EST 2015
% \usepackage[usenames,dvipsnames]{pstricks}
% \usepackage{epsfig}
% \usepackage{pst-grad} % For gradients
% \usepackage{pst-plot} % For axes
\[ \scalebox{0.4} % Change this value to rescale the drawing.
{
\begin{pspicture}(0,-2.6623437)(6.9376955,2.6773438)
\psline[linewidth=0.05](1.1482031,-1.2426562)(3.1482031,-2.637344)(5.148203,-1.2426562)(5.048203,-1.2426562)
\psline[linewidth=0.05](0.14820312,0.1626563)(1.1482031,-1.2373438)(2.1482031,0.1626563)
\psline[linewidth=0.05](3.5482032,0.1626563)(5.148203,-1.2373438)(6.7482033,0.1626563)
\psline[linewidth=0.05](5.14,0.2573438)(5.14,-1.2626562)
\usefont{T1}{ptm}{m}{n}
\rput(0.1566211,2.2826562){\Huge 0}
\usefont{T1}{ptm}{m}{n}
\rput(2.0429296,2.2826562){\Huge 1}
\usefont{T1}{ptm}{m}{n}
\rput(3.5804882,2.3026564){\Huge 2}
\usefont{T1}{ptm}{m}{n}
\rput(5.1138477,2.3026564){\Huge 3}
\usefont{T1}{ptm}{m}{n}
\rput(6.6860743,2.3026564){\Huge 4}
\psline[linewidth=0.05](0.16,1.6773438)(0.16,0.1573438)
\psline[linewidth=0.05](2.14,1.6773438)(2.14,0.1573438)
\psline[linewidth=0.05](3.56,1.6773438)(3.56,0.1573438)
\psline[linewidth=0.05](5.14,1.7573438)(5.14,0.2373438)
\psline[linewidth=0.05](6.74,1.6773438)(6.74,0.1573438)
\end{pspicture} 
}
\]

Suspension operations   give us a family of  functors
$$S_p:Ord(n)\rightarrow Ord(n+1), \ 0\le p \le n.$$ We also define an $\infty$-vertical suspension functor $Ord(n)\rightarrow Ord(\infty)$ as follows. For an $n$-ordinal $T$ its $\infty$-suspension is an $\infty$-ordinal $S^{\infty}T$ whose underlying set is the same as the underlying set of $T$ and $a<_p b$ in $S^{\infty}T$ if
$a<_{n+p-1} b$ in $T .$ It is not hard to see that the sequence
$$ Ord(0)\stackrel{S}{\longrightarrow} Ord(1) \stackrel{S}{\longrightarrow} Ord(2) \longrightarrow \ldots \stackrel{S}{\longrightarrow} Ord(n) \longrightarrow \ldots \stackrel{S^{\infty}}{\longrightarrow} Ord(\infty),$$
exhibits $Ord(\infty)$ as a colimit of $Ord(n) .$\vspace{1ex}

The  categories $Ord(n), 0\le n\le \infty$ are operadic in the sense of Batanin and Markl, cf.  \cite{duoid}. This means that $Ord(n)$ is equipped with 
cardinality  and fiber functors. The cardinality functor  \begin{equation}\label{card}|-|: Ord(n) \to \FinSet\end{equation}  associates to a
$n$-ordinal $T$ its underlying set. Here $\FinSet$ is a skeletal version of the category of finite sets \cite{duoid}.The fiber functor associates to 
each  morphism of $n$-ordinals $\sigma: T \to S$ and  $i \in |S|$ 
the preimage $\sigma^{-1}(i)$ with the induced structure of an
$n$-ordinal. 

The category $\FinSet$ is another example of an operadic category. The fiber functor is given by the preimage like above \cite{duoid}.

Any operadic category $\mathcal{O}$ has an associated category of operads $Op_{\mathcal{O}}(\Ee)$ with values in an arbitrary symmetric monoidal category $(\Ee,\otimes,e)$ \cite{duoid}. 
The category $Op_{\FinSet}(\Ee) = SOp(\Ee)$  is the category of classical symmetric operads in $\Ee.$

The category $Op_n(\Ee)$ of $n$-operads in $\Ee$ is, by definition, the category $Op_{Ord(n)}(\Ee).$

Explicitly an $n$-operad  in $\Ee$ 
is
a collection $\{A_T\}, {T\in Ord(n)}$ of objects in $\Ee$ equipped with the following structure:\vspace{1ex}

- a morphism $\epsilon: e \rightarrow  A(U_n)$ (unit);

- a morphism $m_{\sigma}:A(S)\otimes A(T_0)\otimes \dots \otimes A(T_k)
 \rightarrow A(T)$ (multiplication) for each map of $n$-ordinals $\sigma:T \rightarrow S$, where $T_i = \sigma^{-1}(i), \ i\in\{0,\ldots,k\} = |S|.$\vspace{1ex}

They must satisfy the following identities:\vspace{1ex}

- for any composite map of $n$-ordinals\begin{diagram}[small]T&\rTo^\sigma&S&\rTo^\omega&R\end{diagram}
the associativity diagram

{\unitlength=1mm

\begin{picture}(300,45)(2,0)

\put(20,35){\makebox(0,0){\mbox{$\scriptstyle A(R)\otimes
A(S_{\bullet})\otimes A(T_0^{\bullet}) \otimes  \dots
\otimes
 A(T_i^{\bullet})\otimes  \dots \otimes A(T_k^{\bullet})
$}}}
\put(20,31){\vector(0,-1){12}}

\put(94,31){\vector(0,-1){12}}

\put(90,36.5){\makebox(0,0){\mbox{$\scriptstyle A(R)\otimes
A(S_{0})\otimes A(T_0^{\bullet}) \otimes  \dots
$ }}}

\put(88,33.5){\makebox(0,0){\mbox{$\scriptstyle 
\otimes A(S_{i})\otimes
 A(T_i^{\bullet})\otimes  \dots \otimes A(S_{k})\otimes
A(T_k^{\bullet})
$ }}}

\put(55,36.3){\makebox(0,0){\mbox{$\scriptstyle \cong $}}}
\put(50,35){\vector(1,0){12}}

\put(20,15){\makebox(0,0){\mbox{$\scriptstyle A(S)\otimes
A(T_0^{\bullet}) \otimes  \dots
\otimes
 A(T_i^{\bullet})\otimes  \dots \otimes A(T_k^{\bullet})
$}}}

\put(94,15){\makebox(0,0){\mbox{$\scriptstyle A(R)\otimes
A(T_{\bullet})
$}}}

\put(60,5){\makebox(0,0){\mbox{$ \scriptstyle A(T)
$}}}

\put(35,11){\vector(4,-1){19}}

\put(85,11){\vector(-4,-1){19}}

\end{picture}}

\noindent commutes,
where $$A(S_{\bullet})= A(S_0)\otimes \dots
\otimes A(S_k),$$
$$A(T_{i}^{\bullet}) = A(T_i^0) \otimes \dots\otimes A(T_i^{m_i})$$
and $$ A(T_{\bullet}) =  A(T_0)\otimes \dots
\otimes A(T_k);$$

- for an identity $\sigma = id : T\rightarrow T$ the diagram

{\small \begin{diagram}[small,UO]
A(T)\otimes
{e}\otimes \dots \otimes {e}   &                          &       \rTo               &     &  A(T)\otimes
A(U_n)\otimes \dots \otimes A(U_n)  \\
      \ \ \ \ \ \  \ \ \ \ \ \ \ \ \simeq &\rdTo(2,2)       &        &   \ldTo(2,2) &\quad  \\
          &                         &     A(T)                &      &
\end{diagram}}

\noindent commutes;

- for the unique morphism $T\rightarrow U_n$ the diagram

{\small \begin{diagram}[small,UO]
e \otimes
A(T)
&                          &       \rTo               &     &  A(U_n)\otimes
A(T)
  \\
      \ \ \ \ \simeq &\rdTo(2,2)       &        &   \ldTo(2,2) &\quad  \\
          &                         &     A(T)                &      &
\end{diagram}}

\noindent commutes.

 Functors between operadic categories which preserve cardinalities and fibers are called {\it operadic functors} \cite{duoid}.  The cardinality functor is always an operadic functor. 
 An operadic functor between operadic categories $p:\mathcal{O}\to \mathcal{O}'$ induces a restriction functor
$p^*: Op_{\mathcal{O}'}(\Ee)\to Op_{\mathcal{O}}( \Ee)$ \cite{duoid}. If $\Ee$ is a cocomplete symmetric monoidal category then the restriction functor has a left adjoint $p_!: Op_{\mathcal{O}}(\Ee)\to Op_{\mathcal{O}'}( \Ee).$

Any of the suspension functors is an operadic  functor. In particular the following diagram commutes: 
\begin{gather}\label{cardinality}\begin{diagram}[small,UO]
Ord(n)   &                          &       \rTo^{S_p}               &     &  Ord(n+1)  \\
        \quad |-| &\rdTo(2,2)       &        &   \ldTo(2,2) &|-|\quad  \\
          &                       &     \FinSet                 &      &
\end{diagram}\end{gather}
Hence, it induces the following diagram of adjunctions:

 {\unitlength=1mm
\begin{picture}(200,0)(-10,30)
\put(20,25){\makebox(0,0){\mbox{$Op_n(\Ee)$}}}
%\put(24,25){\vector(1,0){13}}
\put(24,21){\vector(2,-1){16}}
\put(35,13){\vector(-2,1){16}}

\put(51,13){\vector(2,1){16}}
\put(63,21){\vector(-2,-1){16}}

\put(43,25){\makebox(0,0){\mbox{$ $}}}
%\put(43,22){\vector(0,-1){7}}
\put(43,10){\makebox(0,0){\mbox{$SOp(\Ee)$}}}
\put(58,26){\vector(-1,0){26}}
\put(32,24){\vector(1,0){26}}
\put(40,21){\shortstack{\mbox{$\scriptstyle (S_p)_! $}}}
%\put(18,17){\shortstack{\mbox{$id$}}}
%\put(32,17){\shortstack{\mbox{$id$}}}

\put(60,15){\shortstack{\mbox{$\scriptstyle des_{\scriptscriptstyle n+1}^{\scriptscriptstyle } $}}}
\put(43,17){\shortstack{\mbox{$\scriptstyle sym_{\scriptscriptstyle n+1}^{\scriptscriptstyle } $}}}

\put(33,17){\shortstack{\mbox{$\scriptstyle sym_{\scriptscriptstyle n}^{\scriptscriptstyle } $}}}
\put(18,16){\shortstack{\mbox{$\scriptstyle des_{\scriptscriptstyle n}^{\scriptscriptstyle } $}}}

\put(71,25){\makebox(0,0){\mbox{$Op_{n+1}(\Ee)$}}}
\put(41,27.5){\shortstack{\mbox{$\scriptstyle S_p^*$}}}
\end{picture}}

\begin{equation}\label{dessym} \end{equation}

\

\

\

In this diagram the functor $des_n$ is the restriction functor along cardinality functor  
 called {\it desymmetrisation functor} and $sym_n$ is its left adjoint  called {\it symmetrisation functor} \cite{EHBat}.

\begin{example}\rm  Let  $Ass_n\in Op_n(\Ee)$ be an $n$-operad given by   $Ass_{n}(T) = e, T\in Ord(n)(\Ee).$ It is immediately  from the definition of the suspension functors  that 
$S_p^*(Ass_{n+1}) = Ass_n.$ On the other hand, $sym_1(Ass_1) = Ass$ is the classical symmetric operad for monoids while for $n\ge 2 \ $,  $sym_n(Ass_n) = Com$ is the operad for commutative monoids. This is the classical Ekcman-Hilton argument  in disguise, cf. \cite{EHBat}.
\end{example}

Let now $\Ee$ be a closed symmetric monoidal category. An object $X\in \Ee$ has an associated endomorphism symmetric operad $\End_X : $
$$ \End_X(n) = \uEe(X^{\otimes^n},X),$$
where  $\uEe$ is the internal hom of $\Ee.$   

\begin{defin} An algebra of a symmetric operad $A\in SOp(\Ee)$ is an object $X\in \Ee$ equipped with a morphism of operads
$A\to \End_X.$

An algebra of an $n$-operad $B\in Op_n(\Ee)$ is  an object $X\in \Ee$ equipped with a morphism of operads
$B\to des_n(\End_X).$ 
\end{defin}

\begin{lem} \label{algebra} Let $\Ee$ be a cocomplete closed symmetric monoidal category and let $B\in Op_n(\Ee).$  The following categories are equivalent: 
\begin{enumerate}\item
the category of algebras of the $n$-operad $B;$
\item the category of algebras of the $(n+1)$-operad $(S_p)_!(B);$ \item the category of algebras of the  symmetric operad $sym_n(B).$ \end{enumerate}
\end{lem}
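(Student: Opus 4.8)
The plan is to establish the two equivalences $(1)\Leftrightarrow(3)$ and $(1)\Leftrightarrow(2)$ separately, both by exhibiting them as adjoint-equivalence-of-algebra-categories statements that follow from general nonsense about the adjunctions in diagram \eqref{dessym} together with the explicit description of algebras. For $(1)\Leftrightarrow(3)$: by definition an algebra of $B$ is a map $B\to des_n(\End_X)$ in $Op_n(\Ee)$, and an algebra of $sym_n(B)$ is a map $sym_n(B)\to\End_X$ in $SOp(\Ee)$. Since $sym_n\dashv des_n$, these two hom-sets are naturally isomorphic, and this isomorphism is visibly compatible with morphisms of the underlying object $X$ in $\Ee$, so it upgrades to an isomorphism (hence equivalence) of the two algebra categories over $\Ee$. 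The only thing to check carefully is that the bijection $Op_n(\Ee)(B, des_n\End_X)\cong SOp(\Ee)(sym_n B,\End_X)$ is natural in $X$ in the sense needed to match up algebra morphisms; this is immediate from naturality of the adjunction unit/counit together with the fact that a morphism of $X$-algebras is just a morphism $X\to X'$ in $\Ee$ making the relevant square commute.

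For $(1)\Leftrightarrow(2)$: here one uses the other side of the commuting square of adjunctions, namely $(S_p)_!\dashv S_p^*$ together with the identity $des_{n+1}\circ S_p^* = des_n$ coming from commutativity of \eqref{cardinality} (the cardinality functor factors through $S_p$). An algebra of $(S_p)_!(B)$ is a map $(S_p)_!(B)\to des_{n+1}(\End_X)$ in $Op_{n+1}(\Ee)$; by the adjunction this corresponds to a map $B\to S_p^*\,des_{n+1}(\End_X) = des_n(\End_X)$ in $Op_n(\Ee)$, which is exactly a $B$-algebra structure on $X$. Again this bijection is natural in $X$, giving the equivalence of categories. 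Equivalently, one can deduce $(1)\Leftrightarrow(2)$ from $(1)\Leftrightarrow(3)$ applied at both levels $n$ and $n+1$, using that $sym_{n+1}\circ(S_p)_! = sym_n$ (the lower-left path through $SOp(\Ee)$ in \eqref{dessym}), which is the assertion that the big square of adjunctions commutes; then the category of $(S_p)_!(B)$-algebras $\simeq$ category of $sym_{n+1}(S_p)_!(B)$-algebras $=$ category of $sym_n(B)$-algebras $\simeq$ category of $B$-algebras.

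The genuinely substantive point — and the one I would expect to be the main obstacle if one wanted a fully self-contained argument — is the identity $des_{n+1}\circ S_p^* = des_n$ (respectively $sym_{n+1}\circ (S_p)_! = sym_n$), i.e.\ the commutativity of the square \eqref{cardinality} of operadic categories lifting to the induced functors. Granting the framework of \cite{duoid}, this is just functoriality of $\mathcal{O}\mapsto Op_{\mathcal{O}}(\Ee)$ on operadic functors together with the triangle \eqref{cardinality}, so there is nothing to do beyond citing it; but it is the conceptual heart of the lemma, since it is precisely this factorization of the cardinality functor through the suspension that forces suspension to be invisible at the level of algebras. Everything else is formal adjunction bookkeeping, and I would present it as such rather than unwinding the multiplication maps $m_\sigma$ by hand.
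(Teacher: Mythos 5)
Your proposal is correct and is essentially the paper's own argument: the paper proves $(1)\Leftrightarrow(3)$ by transporting a structure map $B\to des_n(\End_X)$ across the adjunction $sym_n\dashv des_n$, and disposes of $(1)\Leftrightarrow(2)$ with ``the proof is similar,'' which is exactly your use of $(S_p)_!\dashv S_p^*$ together with $S_p^*\circ des_{n+1}=des_n$ coming from the commuting triangle (\ref{cardinality}). The only blemish is a composition-order slip where you write $des_{n+1}\circ S_p^*=des_n$ (the functors compose the other way, as you in fact use correctly when you write $S_p^*\,des_{n+1}(\End_X)=des_n(\End_X)$).
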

\begin{proof} If $\Ee$ is cocomplete the symmetrisation functor $sym_n$ exists and we use the adjuction $sym_n\dashv des_n$ to transform a $B$-algebra structure $B\to des_n(\End_X)$ to a
$sym_n(B)$-algebra structure $sym_n(B)\to \End_X.$ The proof for $(S_p)_!(B)$-algebra structure is similar. 
\end{proof}

\begin{defin} A symmetric operad ($n$-operad) $A\in SOp(\Ee)$ ($A\in Op_(\Ee)$) is called constant-free if $A(0)$ ($A(z^n U_0)$) is an initial object in $\Ee.$ 
\end{defin}

The category $CFSOp(\Ee)$ of constant free symmetric operad is equivalent to the category $Op_{\FinSet_0}(\Ee)$ where $\FinSet_0$ is an operadic subcategory of $\FinSet$ of nonemty finite sets and surjective maps.  Analogously, the category of constant-free $n$-operads $CFOp_n(\Ee)$ is equivalent to $Op_{Ord_0(n)}(\Ee)$ where $Ord_0(n)$ is the operadic category of nonempty $n$-ordinals and surjections. This observation allows us to reformulate all previous  statements for symmetric operads and $n$-operads in the context of constant-free operads. So, the commutative triangle of adjunctions (\ref{dessym}),  as well as the analogue of Lemma \ref{algebra} hold for constant-free operads too.

\subsection{Polynomial monads}

Symmetric and $n$-operads are examples of algebras of polynomial monads. 

\begin{defin} A finitary polynomial $P$ is a diagram in $\Set$ of the form 
\begin{diagram}J&\lTo^s&E&\rTo^p&B&\rTo^t&I\end{diagram}
where $p^{-1}(b)$ is a finite set for any $b\in B.$ \end{defin} 
Each polynomial $P$ generates a functor called {\it polynomial functor} between functor categories
$$\underline{P}:\Set^J \to \Set^I$$
which is defined as the composite functor
\begin{diagram}\Set^J&\rTo^{s^*}&\Set^E&\rTo^{p_*}&\Set^B&\rTo^{t_!}&\Set^I\end{diagram}
So, the functor $\underline{P}$ is given by the formula
\begin{equation}\label{PPP}  \underline{P}(X)_i = \coprod_{b\in t^{-1}(i)} \prod_{e\in p^{-1}(b)} X_{s(e)}, \end{equation}
which explains the name `polynomial' that is  a sum of products of formal variables.

{\it A cartesian morphism between polynomial functors} is their natural transformation such that each naturality square is a pullback.  
Composition of finitary polynomial functors is again a finitary polynomial functor. Finitary polynomial functors and their cartesian morphisms form a $2$-category category $\Poly_f.$ 
\begin{defin}A finitary polynomial monad is a monad in the $2$-category $\Poly_f.$ \end{defin}
\begin{remark} A finitary polynomial functor  preserves filtered colimits and pullbacks. A  polynomial monad is  cartesian  that is its underlying functor preserves pullbacks and its unit and multiplication are cartesian natural transformations. 
\end{remark}
\begin{remark} One can consider more general polynomial functors of nonfinitary type. Since in this paper we don't need these more general functors  we call finitary polynomial monads simply polynomial monads.
\end{remark}

Let $\Ee$ be a cocomplete symmetric monoidal category and $P$ be a polynomial functor. One can construct a functor $\underline{P}^{\mathcal{E}}:\Ee^I \to \Ee^I$ given by the formula similar to (\ref{PPP}):
$$ \underline{P}^{\Ee}(X)_i = \coprod_{b\in t^{-1}(i)} \bigotimes_{e\in p^{-1}(b)} X_{s(e)}.$$
If $I = J$ and $P$ was given a structure of a polynomial monad then $\underline{P}^{\Ee}$ acquires a structure of a monad on $\Ee^I.$
\begin{defin} The category of algebras of a polynomial monad $P$ in a cocomplete symmetric monoidal category $\Ee$ is the category of algebras of the monad $\underline{P}^{\Ee}.$
\end{defin} 

\begin{example}There is a polynomial monad $SO$ whose category of algebras is isomorphic to the category  of symmetric operads. This monad is given by the polynomial 
$$\begin{diagram}\NN&\lTo^s&\ORTrees^*&\rTo^p&\ORTrees&\rTo^t&\NN \end{diagram}$$
in which $\NN$ is the set of isomorphism classes of objects in $\FinSet$ and \\ $\ORTrees$ is the set of isomorphism classes of ordered rooted trees. The multiplication in $SO$ is induced  by insertion of a tree to a vertex of another tree, cf. \cite{BB}[Section 9.4]. 

There is a polynomial monad $O(n)$ whose category of algebras is isomorphic to the category of algebras of $n$-operads.
It is  generated by the polynomial
%\begin{gather}free
\begin{diagram}{\tt Ord(n)}&\lTo^s&\nPTrees^*&\rTo^p&\nPTrees_{}&\rTo^t&{\tt Ord(n)}\end{diagram}%\end{gather}
where
 $\tt Ord(n)$ is the set of isomorphism classes of $n$-ordinals and $\nPTrees$ is the set of isomorphism classes of $n$-planar trees.
 The multiplication of the monad is induced by insertion of $n$-planar trees into vertices of   $n$-planar trees, cf. \cite{BB}[Proposition 12.15] 
 \end{example}

The commutative triangle (\ref{cardinality}) induces in an obvious way a commutative triangle of polynomial monads
%\begin{gather}\label{cardinality2}
\begin{gather}\label{poltr}\begin{diagram}[small,UO]
O(n)   &                          &       \rTo               &     &  O(n+1)  \\
      |-|  \quad  &\rdTo(2,2)       &        &   \ldTo(2,2) & \quad |-| \\
          &                         &    SO                 &      &
\end{diagram}\end{gather}
\noindent and the triangle of adjunctions (\ref{dessym}) is also  induced by (\ref{poltr}).

\begin{example}There is a polynomial monad $CFSO$ such that its category of algebras is equivalent to the category of constant-free symmetric operads $CFSOp(\Ee).$ \cite{BB}[Section 9.4]. The corresponding generating polynomial is
$$\begin{diagram}\NN_0&\lTo^s&\ORTrees_{reg}^*&\rTo^p&\ORTrees_{reg}&\rTo^t&\NN_0\end{diagram}$$
where $\NN_0$ is the set of isomorphism classes of nonempty finite sets and \\ $\ORTrees_{reg}$ is the set of isomorphism classes of {\it regular} ordered rooted trees. We call a tree regular if for any vertex of the tree the set of incoming edges at this vertex is not empty (so regular trees do not have stumps) . 

Similarly there is a polynomial monad $CFO(n)$ whose category of algebras is equivalent to the category of constant-free $n$-operads $CFOp_n(\Ee),$ cf. \cite{BB}[Proposition 12.19]. It is generated by a polynomial 
\begin{diagram}{\tt ROrd(n)}&\lTo^s&\nPTrees_{reg}^*&\rTo^p&\nPTrees_{reg}&\rTo^t&{\tt ROrd(n)}\end{diagram}
where
 $\tt ROrd(n)$ is the  set of isomorphism classes of nonempty $n$-ordinals; \\ $\nPTrees_{reg}$ is the set of  isomorphism classes of regular $n$-planar trees. 
 \end{example}

 \subsection{Classifiers for maps between polynomial monads}

For any cartesian morphism of cartesian monads $\phi:S\to T$ one can associate a category (in fact a strict categorical $T$-algebra) $\HS$ with certain  universal property \cite{EHBat,BB,W}. This category is called {\it the classifier of internal $S$-algebras inside categorical $T$-algebras}. 

Classifiers allow to compute the left adjoint functor  between categories of algebras induced by $\phi$ is terms of a  colimit over $\HS.$ In particuar,  
the symmetrisation functor $sym_n$ admits an explicit description as a colimit over the classifier $\sr$ of the map of polynomial monads $|-|:O(n)\to SO$. (See \cite{EHBat}.) 

The homotopy type of the nerve of the classifier $\sr$ was computed in \cite{SymBat}:
$$N(\sr)= \coprod_{k}N(\srk)$$ 
where $N(\srk)$ has homotopy type of the configuration space of $k$ points in $\RR^n.$ It follows that for $n\ge 2$ and $0\le i \le n-2$ the homotopy groups
   $\pi_i(N(\srk),a)= 0.$ This can be reformulated as that the nerve of the unique map of categories 
   \begin{equation}\label{!} !: \srk \to 1\end{equation}
   is an $(n-2)$-local weak equivalence of simplicial sets \cite{cis06}[Corollary 9.2.15]. 
   
   For $n=\infty$ we also have a classifier $\sri.$   It is not hard to see that $\sri$ is the colimit of this sequence of  classifiers $\sr$ induced by the vertical suspension functor and so the nerve of  $\sri$  is a contractible simplicial symmetric operad.

\section{Stabilization of algebras of $n$-operads} \label{stabilization}

\subsection{Model categories of symmetric operads, $n$-operads and their algebras}

Now we assume that our  base symmetric monoidal category $(\Ee,\otimes, e)$  is a cofibrantly generated monoidal model category. 
Let $C$ be a set and  $(T,\mu,\epsilon)$ be a monad on the category $\Ee^C.$  There is a product model structure on the category $\Ee^C$ and so one can try to induce a model structure on the category of $T$-algebras as follows. We define an algebra morphism $f:X\to Y$ to be a weak equivalence (fibration) if $U(f)$ is a weak equivalence (resp. fibration) in $\Ee^C,$ where $U$ is the forgetful functor from $T$-algebras to $\Ee^C.$ It is more often with this definition that we will get only a semimodel structure \cite{Fresse,WY} not the full model structure on algebras, but it is sufficient for our purpose. If such a (semi)model structure exists we call it transferred model structure. 
An algebra $X$ of $T$ is called {\it relatively cofibrant}  if $U(X)$ is a cofibrant object in $\Ee^C.$   

\begin{pro}\label{poly} If $e \in \Ee$ is cofibrant then for any polynomial monad $T$ the category $Alg_T(\Ee)$ admits a transferred semimodel structure in which all cofibrant algebras are relatively cofibrant. 
\end{pro}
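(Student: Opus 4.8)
The strategy is to verify the hypotheses of a general transfer theorem for (semi)model structures along the free–forgetful adjunction $F \dashv U : \Alg_T(\Ee) \to \Ee^C$. Since $\Ee$ is cofibrantly generated, $\Ee^C$ is cofibrantly generated with generating cofibrations $FI_C$ and trivial cofibrations $FJ_C$ obtained by inserting the generators of $\Ee$ in one coordinate; we then take $FI_C$ and $FJ_C$ (images under the free functor) as candidate generators on $\Alg_T(\Ee)$. The two things to check are: (a) smallness/cofibrant generation — the monad $\underline{T}^{\Ee}$ is finitary (it preserves filtered colimits, because $T$ is a finitary polynomial monad and $\otimes$ preserves filtered colimits in $\Ee$ on each side), so $\Alg_T(\Ee)$ is locally presentable and the small object argument applies; (b) the key acyclicity condition: relative $FJ_C$-cell complexes in $\Alg_T(\Ee)$ are sent by $U$ to weak equivalences in $\Ee^C$. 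Condition (b) is where a full model structure may fail and one only gets a semimodel structure, so I would phrase (b) in the semimodel form: it suffices that for $X$ a \emph{cofibrant} $T$-algebra, pushouts of free maps on trivial cofibrations are weak equivalences — this is exactly the content of the Schwede–Shipley/Fresse/White–Yau transfer criterion for semimodel categories.

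The heart of the argument is a filtration of the pushout $X \to X[j]$ in $\Alg_T(\Ee)$ along a free map $F(j) : F(A) \to F(B)$ with $j : A \to B$ a (trivial) cofibration in $\Ee^C$. Because $\underline{T}^{\Ee}$ is polynomial, this pushout admits the standard ``$T$-tree'' filtration (as in \cite{BB}): $X = X_0 \to X_1 \to \cdots \to \colim_k X_k = X[j]$, where each stage $X_{k-1} \to X_k$ is itself a pushout in $\Ee^C$ of a map built out of the operations of $T$, the object $X$, and \emph{pushout–product powers} $Q^k_{k}(A \to B)$ of the map $j$. The polynomiality (cartesianness) of $T$ is precisely what makes this filtration exist with pushout-products of $j$ appearing — each vertex of a $T$-tree with $\ell$ inputs contributes a $\Sigma_\ell$-free (in fact, since $T$ is polynomial and not merely symmetric, genuinely free) arrangement, so no ``norm'' or fixed-point subtleties arise. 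Then: if $j$ is a trivial cofibration and $U(X)$ is cofibrant, each $Q^k_k(j)$ is a trivial cofibration by the pushout-product axiom and cofibrancy of the relevant powers of $A,B$; tensoring with the fixed objects coming from $X$ and from the polynomial $T$ preserves this because those objects are cofibrant (using that $U(X)$ is cofibrant, that $e$ is cofibrant so the unit contributes nothing bad, and that $\otimes$ with a cofibrant object is left Quillen); hence every $X_{k-1} \to X_k$ is a trivial cofibration in $\Ee^C$, and so is the transfinite composite $U(X) \to U(X[j])$. This gives condition (b).

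For the final clause — cofibrant algebras are relatively cofibrant — one argues that a generating cofibration $F(i)$ with $i : A \to B$ a cofibration in $\Ee^C$, pushed out against a \emph{relatively cofibrant} algebra $X$, yields again a relatively cofibrant algebra: run the same $T$-tree filtration, observe that now each $Q^k_k(i)$ is a cofibration (again by the pushout-product axiom, using $e$ cofibrant so that $F(\emptyset) \to F(A) \to \cdots$ starts from a cofibrant object, i.e.\ $U(F(X_0))$ is cofibrant), tensoring with cofibrant objects from $T$ and from $U(X)$ keeps things cofibrations, so $U(X) \to U(X[i])$ is a cofibration with cofibrant domain, hence $U(X[i])$ is cofibrant. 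Since the initial algebra $F(\emptyset^C)$ has underlying object $\underline{T}^{\Ee}(\emptyset)$, whose components are coproducts of tensor powers of $e$ over the ``unary'' part of $T$ — cofibrant because $e$ is cofibrant and $T$ is finitary — the initial algebra is relatively cofibrant; and every cofibrant algebra is a retract of a transfinite composite of such pushouts starting from the initial algebra, so it is relatively cofibrant as well.

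\textbf{Main obstacle.} The crux, and the only genuinely technical point, is establishing the $T$-tree filtration of pushouts along free maps and verifying that at each stage the map in $\Ee^C$ is (up to tensoring with cofibrant objects) a pushout-product power of the original cofibration. This is where finitariness and cartesianness of the polynomial monad are indispensable, and where one must be careful that the objects of $\Ee$ being tensored in are honestly cofibrant — which is exactly why the hypothesis ``$e$ cofibrant'' is needed and why the conclusion is phrased with ``relatively cofibrant'' rather than ``cofibrant''. Everything else (smallness, the small object argument, closure properties) is formal given local presentability.
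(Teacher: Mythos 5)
Your proposal is correct in substance but takes a genuinely different route from the paper. The paper does not redo any filtration argument: it observes that $\Alg_T(\Ee)$ is isomorphic to the category of algebras of a coloured symmetric operad $O(T)$ attached to the polynomial monad (as in \cite{BB}), whose object of operations for each tuple of colours is a coproduct $\sqcup_{O(c_1,\ldots,c_m;c)}e$ of copies of the unit indexed by a set carrying a free action of the symmetric groups; cofibrancy of $e$ then makes $O(T)$ a $\Sigma$-cofibrant coloured operad, and both the transferred semimodel structure and the relative cofibrancy of cofibrant algebras are quoted from \cite{WY}[Theorem 6.3.1]. What you do instead is essentially reprove that cited theorem in the polynomial case: the ``$T$-tree'' filtration of a pushout along a free map, with stages given by enveloping-operad pieces tensored against pushout--product powers of the attaching (trivial) cofibration, is exactly the engine inside the proofs of \cite{WY}, \cite{Fresse} and \cite{BB}. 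Your route is more self-contained and makes visible where cofibrancy of $e$ and $\Sigma$-freeness (automatic for polynomial monads) enter; the paper's route is a two-line reduction that delegates the combinatorics to a reference, at the price of invoking the $O(T)$ construction from \cite{BB}.

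A few small repairs to your write-up. Local presentability of $\Alg_T(\Ee)$ is neither assumed (the proposition does not require $\Ee$ to be locally presentable) nor needed: smallness of the domains of the transferred generating maps follows from smallness in $\Ee^C$ together with the fact that the forgetful functor preserves filtered colimits, since the monad is finitary. The initial algebra is fed by the \emph{nullary} (not ``unary'') part of $T$, its components being coproducts of copies of $e$. Finally, the apparent circularity between ``cofibrant algebra'' and ``relatively cofibrant'' must be broken by induction over cell attachments starting from the initial algebra: one first proves that pushing out a free (trivial) cofibration against an algebra with cofibrant underlying object yields a (trivial) cofibration in $\Ee^C$ with relatively cofibrant target, and only then deduces both the acyclicity condition and the relative cofibrancy of cofibrant algebras; your final paragraph contains these ingredients but states them in the reverse logical order.
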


\begin{proof} The category of algebras of $T$ is isomorphic to the category of algebras of a   coloured symmetric operad $O(T)$ \cite{BB} whose spaces of operations are of the form $e\otimes O(c_1,\ldots,c_m;c) = \sqcup_{O(c_1,\ldots,c_m;c)} e$ where $O(c_1,\ldots,c_m;c)$ is a set with free action of symmetric groups. If $e$ is cofibrant this underlying object of operations  is a $\Sigma$-cofibrant  object and so $O(T)$ is $\Sigma$-cofibrant operad. The statement of proposition follows now from \cite{WY}[Theorem 6.3.1].  
\end{proof}

\begin{pro}\label{homo} If $e$ is cofibrant in $\Ee$ then
\begin{enumerate}  
\item The categories $Op_n(\Ee), CFOp_n(\Ee), SOp(\Ee)$ and $CFSOp(\Ee)$ admit transferred semimodel structures;
 \item Cofibrant symmetric and $n$-operads are relatively cofibrant;
\item The triangle (\ref{dessym}) is a triangle of Quillen adjunctions;
\item  The category of algebras of cofibrant symmetric and cofibrant $n$-operads  admit transferred (semi)model structures; 
\item For any weak equivalence between cofibrant operads $f:A\to B$ the induced  adjunction $f_!\dashv f^*$ between categories of algebras   is a Quillen equivalence.
\end{enumerate}
\end{pro}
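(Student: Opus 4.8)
The plan is to reduce everything to the machinery of transferred semimodel structures for algebras of polynomial monads, using the fact (recalled in the previous section) that symmetric operads, $n$-operads, and their constant-free variants are all algebras of polynomial monads, namely $SO$, $O(n)$, $CFSO$, $CFO(n)$. Parts (1) and (2) are then immediate consequences of Proposition \ref{poly}: since $e$ is cofibrant, each of these polynomial monads gives rise to a $\Sigma$-cofibrant coloured symmetric operad, so the transferred semimodel structures exist and cofibrant objects are relatively cofibrant (i.e.\ their underlying collections are cofibrant in $\Ee^C$). For part (3) I would observe that the two legs of the triangle (\ref{dessym}) are induced, via the adjunctions $p_! \dashv p^*$ for operadic functors $p$, by the commuting triangle of polynomial monads (\ref{poltr}); since each such $p^*$ is defined on underlying collections by restriction along a map of colour sets (the cardinality functor, resp.\ a suspension), $p^*$ visibly preserves the classes of weak equivalences and fibrations, which are created by the forgetful functors. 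Hence all three restriction functors in the triangle are right Quillen, and the triangle is a triangle of Quillen adjunctions.

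For part (4): once $A$ is a cofibrant symmetric (or $n$-) operad, by part (2) its underlying collection is $\Sigma$-cofibrant, so $A$ is $\Sigma$-cofibrant as an operad; then the existence of the transferred semimodel structure on $Alg_A(\Ee)$ follows again from the Spitzweck--Fresse--style / \cite{WY}[Theorem 6.3.1] criterion for $\Sigma$-cofibrant operads (together with cofibrancy of $e$). The same argument applies to cofibrant $n$-operads after translating to the associated coloured symmetric operad $O(T)$-picture.

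The main work is part (5), the invariance statement: for a weak equivalence $f\colon A\to B$ between cofibrant operads, $f_!\dashv f^*$ is a Quillen equivalence. Here the key point is rectification / homotopy invariance of algebras along weak equivalences of $\Sigma$-cofibrant operads. I would argue: $f$ is a weak equivalence between $\Sigma$-cofibrant (in fact relatively cofibrant, by (2)) operads, so on underlying collections it is a weak equivalence between cofibrant objects of $\Ee^C$; one then invokes the standard result (again \cite{WY}, or Fresse) that a weak equivalence of $\Sigma$-cofibrant operads induces a Quillen equivalence on categories of algebras, provided the base model category is sufficiently nice (cofibrantly generated, $e$ cofibrant, and the unit/monoid-type axioms needed to run the small object argument on free algebra extensions). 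Concretely the Quillen equivalence is checked by: (a) showing $f_!$ preserves cofibrations and trivial cofibrations — formal from adjunction since $f^*$ is right Quillen by the same restriction-of-colours argument as in (3); and (b) showing that for a cofibrant $A$-algebra $X$ the unit $X\to f^* f_!(X)$ is a weak equivalence, which is the genuine content and is proved by a cell-by-cell induction over a cofibrant cell presentation of $X$, using $\Sigma$-cofibrancy of $A$ and $B$ to control the pushouts of free-algebra maps and the fact that $f$ is a weak equivalence of collections to compare the two filtrations. I expect step (b) — the comparison of the transfinite filtrations computing $f_!(X)$ and $X$ and showing the comparison map is a weak equivalence at each stage — to be the real obstacle, since it requires the monoidal model category $\Ee$ to interact well with the relevant colimits; but this is exactly the hypothesis under which \cite{WY}[Theorem 6.3.1] and its companion invariance theorem are stated, so I would simply cite it, having verified that cofibrancy of $e$ plus the standing cofibrant generation assumption put us in its scope.
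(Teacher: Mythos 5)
Your handling of (1)--(3) and of the symmetric-operad half of (4)--(5) matches the paper: parts (1) and (2) are exactly Proposition \ref{poly} applied to the polynomial monads $SO$, $O(n)$, $CFSO$, $CFO(n)$, part (3) is the observation that the restriction functors in (\ref{dessym}) preserve the transferred fibrations and weak equivalences, and for cofibrant symmetric operads the paper likewise just cites \cite{Fresse}[Propositions 4.4.3 and 4.4.6] and \cite{WY}. The genuine gap is in how you treat algebras of a cofibrant \emph{$n$-operad} in (4) (and, implicitly, in (5)). An $n$-operad is not a coloured symmetric operad, and its algebras are defined by maps $B\to des_n(\End_X)$; the transfer criterion of \cite{WY}[Theorem 6.3.1] therefore does not apply to $B$ directly, and ``translating to the associated coloured symmetric operad $O(T)$'' does not help: for $T=O(n)$ the operad $O(T)$ is the one whose algebras are $n$-operads themselves, not the algebras of a fixed $n$-operad $B$, so it is the wrong object. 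The step you are missing is the paper's use of Lemma \ref{algebra}: the category of $B$-algebras is isomorphic to the category of $sym_n(B)$-algebras, and since by part (3) $sym_n$ is left Quillen, $sym_n(B)$ is a cofibrant symmetric operad, so the symmetric-operad transfer results apply. Note also that relative cofibrancy of $B$ over $Ord(n)$ (your appeal to part (2)) does not by itself give any cofibrancy control on $sym_n(B)$ --- the symmetrisation is a nontrivial colimit over a classifier --- so cofibrancy of $B$ as an operad, not just of its underlying collection, is what is really used.

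The same repair is needed for (5) in the $n$-operad case: given a weak equivalence $f:A\to B$ of cofibrant $n$-operads, apply Ken Brown's lemma to the left Quillen functor $sym_n$ to get a weak equivalence $sym_n(f)$ between cofibrant symmetric operads, identify the algebra categories via Lemma \ref{algebra}, and then invoke the invariance theorem for symmetric operads (\cite{Fresse}[Proposition 4.4.6], or the corresponding statement in \cite{WY}), which is exactly the citation the paper makes; your cell-by-cell filtration sketch is the content of that cited theorem rather than something you need to reprove, but it only lives in the symmetric world, so without the reduction through $sym_n$ your argument does not reach the $n$-operadic statement at all.
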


\begin{proof} Symmetric operads (general  or constant free) as well as $n$-operads (general  or constant free) are algebras of polynomial monads.
So we are in the conditions of Proposition \ref{poly}. The existence of transferred (semi)model structure on algebras of cofibrant symmetric operads is proven in \cite{Fresse}[Proposition 4.4.3] and \cite{WY}. The existence of transferred model structure on algebras of cofibrant $n$-operads follows from this and Lemma \ref{algebra}. Indeed,  since $sym_n$ is a left Quillen functor $sym_n(A)$ is a cofibrant symmetric operad for any cofibrant $n$-operad $A.$  The last point of the Proposition  is  proven in \cite{Fresse}[Proposition 4.4.6].
\end{proof}

\begin{remark} This semimodel structure on operads is often a full model structures \cite{BB,WY} but not always. For example, the category of symmetric operads ($n$-operads for $n\ge 2$) in the category of chain complexes of finite characteristic does not admit full model structures \cite{BB} but there is a full model structure on the category of constant-free symmetric or $n$-operads for any compactly generated monoidal model category which satisfies the monoid axiom of Schwede and Shipley \cite{BB}.    

\end{remark}

\subsection{Stabilization of algebras}

In this section we prove stabilisation of homotopy categories of algebras of $n$-operads.  The same proof works for constant-free $n$-operads so we do not mention them anymore. 
To simplify notation we fix a $p\ge 0$ and call $p$-suspension of  an $n$-ordinal simply a suspension and we denote it $S:Ord(n)\to Ord(n+1).$ We also denote $S$ the map of polynomial monads induced by the suspension. The proof of our main result does not depend on $p.$

Let $\Ee$ satisfies all assumptions of Proposition \ref{homo}.      Let $G_{n}\in Op_n(\Ee)$ be a cofibrant replacement for $Ass_n .$       We will denote  by $B_{n}(\Ee)$   the category of $G_{n}$-algebras in $\Ee.$ Let also $E_{\infty}(\Ee)$ be the model category of $E_{\infty}$-algebras in $\Ee$ that is the category of algebras of a cofibrant replacement $E$ of the symmetric operad $Com.$ 
\begin{remark}\rm
 The category  $B_{n}(\Ee)$ is equivalent to the category of algebras of the symmetric operad $sym_n(G_{n})$ which is a cofibrant $E_n$-operad, cf. \cite{SymBat}.
\end{remark}

By Lemma \ref{algebra}  there is an isomorphism of categories of algebras of an $n$-operad $G_n$ and an $(n+1)$-operad $S_!(G_{n}).$   
Also observe  that $S_!$ is a left Quillen functor and, hence, preserves cofibrations. In particular, the operad $S_!(G_{n})$ is cofibrant.
There is a  map of $(n+1)$-operads  $i: S_!(G_{n})\to G_{n+1}.$  
Indeed, since $S^*(Ass_{n+1}) = Ass_n$ by adjunction we have  a map $S_!(G_{n})\to Ass_{n+1}.$ We  also have a trivial fibration $G_{n+1}\to Ass_{n+1}.$ Since $S_!(G_{n})$ is cofibrant there is a lifting $i: S_!(G_{n})\to G_{n+1}.$  Without loss of generality we can think that $i$ is a cofibration because if it is not we can always factorise it as  cofibration followed by a trivial fibration and so replace $G_{n+1}$ by another   cofibrant operad with a trivial fibration to $Ass_{n+1}.$   

The morphism  $i$ induces a Quillen adjunction
between algebras of $S_!(G_{n})$ and algebras of $G_{n+1}$ and so between algebras of $G_{n}$ and $G_{n+1}.$ Slightly abusing notations we will denote this adjunction $i^*\vdash i_!.$

Recall that  a
\emph{standard system of simplices} in a monoidal model category $\Ee$ is a cosimplicial object $\delta$ in $\Ee$ satisfying the following properties \cite[Definition A.6]{BergerMoerdijk0}:
\begin{itemize}
\item[(i)]
$\delta$ is cofibrant for the Reedy model structure on
  $\Ee^{\Delta}$,
\item[(ii)] 
$\delta^0$ is the unit object $I$ of $\Ee$ and the
  simplicial operators $[m]\to[n]$ act via weak equivalences
  $\delta^m\to\delta^n$ in $\Ee$, and
\item[(iii)]
the simplicial realization
  functor $|-|_\delta=(-)\otimes_\Delta\delta:\Ee^{\Delta^{op}}\to
  \Ee$ is a symmetric monoidal functor whose structural maps
  \[
|X|_\delta\otimes_V |Y|_\delta\to|X\otimes_V Y|_{\delta}
\]
are weak
  equivalences for Reedy-cofibrant objects $X,Y \in \Ee^{\Delta^{op}}$.\end{itemize}  

Recall also that   a model category $\Ee$  is called {\it $k$-truncated} if for all $X,Y\in \Ee$  $$\pi_i(\widetilde{\Ee}(X,Y), a) = 0 \ ,  \ i > k ,$$   
 for any choice of base point $a.$ Here $\widetilde{\Ee}(X,Y)$ is a homotopy function complex of $\Ee$ \cite{Hirschhorn}.
 
\begin{theorem}\label{stab} Let $(\Ee,\otimes,e)$ be a cofibrantly generated monoidal model category 
whose unit $e\in \Ee$ is cofibrant. 
Then \begin{itemize} \item[(a)] for any $2\le n < \infty$ there is a commutative triangle of Quillen adjunctions:

 {\unitlength=1mm
\begin{picture}(200,30)(-10,5)
\put(20,25){\makebox(0,0){\mbox{$B_{n}(\Ee)$}}}
%\put(24,25){\vector(1,0){13}}
\put(24,21){\vector(2,-1){16}}
\put(35,13){\vector(-2,1){16}}

\put(51,13){\vector(2,1){16}}
\put(63,21){\vector(-2,-1){16}}

\put(43,25){\makebox(0,0){\mbox{$ $}}}
%\put(43,22){\vector(0,-1){7}}
\put(43,10){\makebox(0,0){\mbox{$E_{\infty}(\Ee)$}}}
\put(58,26){\vector(-1,0){26}}
\put(32,24){\vector(1,0){26}}
\put(40,21){\shortstack{\mbox{$i_! $}}}
%\put(18,17){\shortstack{\mbox{$id$}}}
%\put(32,17){\shortstack{\mbox{$id$}}}

\put(38,17){\shortstack{\mbox{$ $}}}

%\put(60,15){\shortstack{\mbox{$sym_{\scriptscriptstyle \infty}^{\scriptscriptstyle loc} $}}}
%\put(43,16){\shortstack{\mbox{$des_{\scriptscriptstyle \infty}^{\scriptscriptstyle loc} $}}}

\put(68,25){\makebox(0,0){\mbox{$B_{n+1}(\Ee)$}}}
\put(40,27){\shortstack{\mbox{$i^* $}}}
\end{picture}}
\item[(b)]  If $\Ee$ has a standard system of simplices      then there is a Quillen equivalence    

 {\unitlength=1mm
\begin{picture}(200,15)(-10,17)
\put(22,25){\makebox(0,0){\mbox{$B_{\infty}(\Ee)$}}}
%\put(24,25){\vector(1,0){13}}
%\put(24,21){\vector(2,-1){16}}
%\put(35,13){\vector(-2,1){16}}

%\put(51,13){\vector(2,1){16}}
%\put(63,21){\vector(-2,-1){16}}

%\put(43,25){\makebox(0,0){\mbox{$ $}}}
%\put(43,22){\vector(0,-1){7}}
%\put(43,10){\makebox(0,0){\mbox{$E_{\infty}(\M)$}}}
\put(58,26){\vector(-1,0){26}}
\put(32,24){\vector(1,0){26}}
%\put(40,21){\shortstack{\mbox{$i_! $}}}
%\put(18,17){\shortstack{\mbox{$id$}}}
%\put(32,17){\shortstack{\mbox{$id$}}}

\put(38,17){\shortstack{\mbox{$ $}}}

%\put(60,15){\shortstack{\mbox{$sym_{\scriptscriptstyle \infty}^{\scriptscriptstyle loc} $}}}
%\put(43,16){\shortstack{\mbox{$des_{\scriptscriptstyle \infty}^{\scriptscriptstyle loc} $}}}

\put(68,25){\makebox(0,0){\mbox{$E_{\infty}(\Ee)$}}}
%\put(40,27){\shortstack{\mbox{$i^* $}}}
\end{picture}}

\item[(c)]  If, in addition, $\Ee$ is $k$-truncated then  the triangle from (a)
 is a  triangle of Quillen equivalences for any $ n \ge k+2 .$ 
\end{itemize}

\end{theorem}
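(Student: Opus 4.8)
The plan is to prove (a) by assembling adjunctions already at hand, to reduce (b) and (c) to a single homotopical computation of the symmetrisations $sym_n(G_n)$ over the classifiers, and then to feed the connectivity estimates for those classifiers into that computation. For (a): since (\ref{dessym}) is a triangle of Quillen adjunctions (Proposition~\ref{homo}(3)), $S_!$ is left Quillen, so $S_!(G_n)$ is a cofibrant $(n+1)$-operad whose algebras are $B_n(\Ee)$ by Lemma~\ref{algebra}; this, together with the cofibration $i:S_!(G_n)\to G_{n+1}$ constructed just before the theorem, gives the adjunction $i_!\dashv i^*$. For the two legs into $E_\infty(\Ee)$ I would symmetrise: as $sym_n$ is left Quillen, $sym_n(G_n)$ and $sym_{n+1}(G_{n+1})$ are cofibrant symmetric operads, and since $sym_m(Ass_m)=Com$ for $m\ge2$ (the Eckmann--Hilton example), applying $sym_n$ to $G_n\to Ass_n$ gives $sym_n(G_n)\to Com$, which I would lift along the trivial fibration $E\to Com$ — choosing first a lift $e_{n+1}:sym_{n+1}(G_{n+1})\to E$. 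Passing to left adjoints in (\ref{dessym}) yields $sym_{n+1}\circ S_!=sym_n$, so applying $sym_{n+1}$ to the commuting triangle $S_!(G_n)\xrightarrow{i}G_{n+1}\to Ass_{n+1}$ (whose lower edge is the adjunct of $G_n\to Ass_n=S^*(Ass_{n+1})$) produces a map $\bar\imath:sym_n(G_n)\to sym_{n+1}(G_{n+1})$ over $Com$; setting $e_n:=e_{n+1}\circ\bar\imath$ makes the triangle of operad maps with edges $\bar\imath,e_{n+1},e_n$ strictly commutative, so the induced triangle of left Quillen functors on algebras commutes and, under Lemma~\ref{algebra}, is exactly the triangle of~(a).

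\emph{The homotopical computation, and (b).} The fact I would use is that $sym_n$ is computed as a colimit over the classifier $\sr$ (see \cite{EHBat}) and that a cofibrant operad yields a cofibrant diagram over it, so that the defining colimit over $\srk$ computes the homotopy colimit; hence $sym_n(G_n)(k)$ is a homotopy colimit over $\srk$. Since $G_n\to Ass_n$ is a levelwise weak equivalence, this diagram is objectwise weakly equivalent to the constant diagram at $e$, so, using the standard system of simplices $\delta$ and the Bousfield--Kan formula for the homotopy colimit of a constant diagram, $sym_n(G_n)(k)\simeq N(\srk)\otimes_\delta e$, and similarly $sym_\infty(G_\infty)(k)\simeq N(\srki)\otimes_\delta e$. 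As $N(\sri)$ is a contractible simplicial symmetric operad, $sym_\infty(G_\infty)(k)\to e$ is a weak equivalence for every $k$; therefore $sym_\infty(G_\infty)\to Com$ (coming from $G_\infty\to Ass_\infty$ and $sym_\infty(Ass_\infty)=Com$), and any lift of it through $E\to Com$, is a weak equivalence of cofibrant symmetric operads, so Proposition~\ref{homo}(5) together with the $\infty$-analogue of Lemma~\ref{algebra} yields the Quillen equivalence $B_\infty(\Ee)\simeq E_\infty(\Ee)$ of~(b).

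\emph{Part (c).} It suffices to show that $B_n(\Ee)\to E_\infty(\Ee)$ is a Quillen equivalence for every $n\ge k+2$: then both legs of the triangle of~(a) with target $E_\infty(\Ee)$ are Quillen equivalences (since $n,n+1\ge k+2$), and two-out-of-three applied to the induced commuting triangle of homotopy categories makes $i_!:B_n(\Ee)\to B_{n+1}(\Ee)$ one too, so the whole triangle consists of Quillen equivalences. By the previous paragraph, $B_n(\Ee)\to E_\infty(\Ee)$ is a Quillen equivalence as soon as $N(\srk)\otimes_\delta e\to e$ is a weak equivalence in $\Ee$ for all $k$. Now $N(\srk)\to\ast$ is an $(n-2)$-local weak equivalence of simplicial sets (it has the homotopy type of the configuration space of $k$ points in $\RR^n$, hence is $(n-2)$-connected), and the realisation $(-)\otimes_\delta e:sSet\to\Ee$ has right adjoint $Z\mapsto\bigl([p]\mapsto\Ee(\delta^p,Z)\bigr)$, which on a fibrant $Z$ is a model for $\widetilde{\Ee}(e,Z)$ and is therefore $k$-truncated because $\Ee$ is. Hence, when $k\le n-2$, that right adjoint carries fibrant objects to $(n-2)$-truncated simplicial sets, so $(-)\otimes_\delta e$ is left Quillen for the $(n-2)$-local model structure on $sSet$ and sends the $(n-2)$-local equivalence $N(\srk)\to\ast$ to the weak equivalence $N(\srk)\otimes_\delta e\to e$; this gives~(c).

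\emph{Main obstacle.} I expect the real work to be the homotopical identification $sym_n(G_n)(k)\simeq N(\srk)\otimes_\delta e$, which rests on the colimit description of $sym_n$, on the fact that a cofibrant operad produces a classifier-diagram whose colimit computes the homotopy colimit, and on the standard system of simplices realising that homotopy colimit. Once this is in place, the connectivity of $\Conf_k(\RR^n)$, the truncation bookkeeping of~(c), and the final two-out-of-three are routine, and~(a) is purely formal.
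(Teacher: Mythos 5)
Your proposal is correct and follows essentially the same route as the paper: part (a) via a strictly commutative triangle of cofibrant operad maps over $Com$ lifted through $E\to Com$ (using $sym_{n+1}\circ S_!\cong sym_n$), and parts (b), (c) by identifying the derived symmetrisation of $G_n$ with a homotopy colimit over the classifier $\sr$, reducing to the constant diagram at $e$, and playing the $(n-2)$-connectivity of $N(\srk)$ (contractibility of $\sri$ for (b)) against the $k$-truncation of $\Ee$, finishing with two-out-of-three. The minor differences (order of choosing the lifts in (a), phrasing the truncation step as a left Quillen functor for the $(n-2)$-local structure rather than testing homotopy function complexes) are cosmetic.
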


\begin{proof}  Apply the symmetrisation functor $sym_{n}$ to the cofibrant replacement  $G_{n}\to Ass_{n}.$  
We have a morphism $P_{n}: sym_{n}(G_{n})\to sym_{n}(Ass_{n})= Com$ and, hence a lifting 
of this morphism to the morphism of operads  $sym_{n}(G_{n})\to E.$ By (\ref{dessym}) we can replace it by a morphism
$sym_{n+1}(S_!(G_{n}))\to E.$ 
Applying $sym_{n+1}$ to the cofibration $i: S_!(G_{n})\to G_{n+1}$ we have a composite 
$$sym_{n+1}(S_!(G_n))\to sym_{n+1}(G_{n+1}) \to sym_{n+1}(1_{n+1}) = Com$$
and since $G_n$ is cofibrant we have a lifting $$sym_{n+1}(S_!(G_n))\to E.$$
So, we have a commutative   
 square of operads

{\unitlength=1mm

\begin{picture}(40,34)(-29,0)
%\put(39.5,15){\shortstack{\mbox{$\scriptstyle A_n$}}}
%\put(47,15){\shortstack{\mbox{$\scriptstyle B_n$}}}
%\put(28,3.5){\shortstack{\mbox{$\scriptstyle S_!$}}}
%\put(28,9){\shortstack{\mbox{$\scriptstyle S^*$}}}
%\put(8,15){\shortstack{\mbox{$\scriptstyle I_n$}}}
%\put(21,15){\shortstack{\mbox{$\scriptstyle L_n$}}}
\put(13,25){\makebox(0,0){\mbox{$sym_{n+1}(S_!(G_{n})) $}}}

%\put(-18,25){\makebox(0,0){\mbox{$sym_{n}(G_{n}) $}}}
%\put(-7,24){\shortstack{\mbox{$\simeq$}}}

\put(21.8,11){\line(3,2){4}}
\put(27.5,14.8){\line(3,2){4}}
\put(34,19){\vector(3,2){4}}

%\put(17,21){\vector(2,-1){21}}
%\put(14,11){\vector(0,1){10}}
\put(12,21){\vector(0,-1){10}}%
%\put(47,15){\shortstack{\mbox{$ $}}}
\put(28,25){\vector(1,0){11}}
%\put(37,26){\vector(-1,0){16}}
\put(29,8){\shortstack{\mbox{$\scriptstyle P_{n+1}$}}}
\put(-5,15){\shortstack{\mbox{$\scriptstyle sym_{n+1}(i)$}}}
\put(42,25){\makebox(0,0){\mbox{$E$}}}
\put(42,21){\vector(0,-1){10}}
%\put(46,11){\vector(0,1){10}}
\put(13,7){\makebox(0,0){\mbox{$sym_{n+1}(G_{n+1})$}}}
\put(27,7){\vector(1,0){11}}
%\put(37,8){\vector(-1,0){16}}
%\put(27,9){\shortstack{\mbox{$[\sigma] $}}}
%\put(45,7){\makebox(0,0){\mbox{$BO$}}}
\put(44,7){\makebox(0,0){\mbox{$Com$}}}
\end{picture}}

 \noindent and, hence, a lifting $sym_{n+1}(G_{n+1})\to E.$ The upper commutative triangle of operads induces the triangle of Quillen adjunctions. This proves  statement (a).
 
 Let us first prove statement (c) of the theorem, so
we assume that $\Ee$ is $k$-truncated. 
 
 By construction  the composite of left vertical morphism and the bottom horizontal morphism is $sym_{n+1}(S_!(P_n))$   and  by naturality of isomorphism $sym_n \simeq sym_{n+1}(S_!)$  is isomorphic to $P_n.$  To finish the proof it will be enough to show that $P_{n}$ and $P_{n+1}$ are  weak equivalences of operads and so $sym_n(G_{n})$ and $sym_{n+1}(G_{n+1})$ are both cofibrant replacements of $Com$ in the category of symmetric operads in $\Ee.$  The morphism  $sym_{n+1}(i)$ is then a weak equivalence by two out of three property.  
 
 Since $G_{n}$ is cofibrant the operad $sym_n(G_{n})$ is weakly equvalent to the operad $\mathbb{L}sym_n(G_{n}),$ where $\mathbb{L}sym_n$ is the left derived  symmetrisation functor. The underlying object of $G_{n}$ is cofibrant and $\Ee$ has standard system of simplices so we can apply Theorem  8.2 from \cite{BB}. This theorem states  that 
 $\mathbb{L}sym_n(G_{n})(T)$ is the homotopy colimit in $\Ee$ of a diagram  $\widetilde{G_{n}}: \sr \to \Ee.$

  The
functor $\widetilde{G_{n}}$ representing 
the $n$-operad $G_{n}$ has value on an object $\tau \in  \sr$ given by a certain tensor product of values of the operad $G_{n}$ and, hence, the functor 
$\widetilde{G_{n}}$ is equipped with a canonical  weak equivalence   $\widetilde{G_{n}}(\tau)\to !^*(e),$ where $!^*(e)$ is the constant functor on $\sr$ whose value is the tensor unit $e.$ 
%To prove that the homotopy colimit of this diagram in $tr_k(V)$ is weakly equivalent to $I$ we have to see that for  any $k$-truncated object $S$ and understand the mapping spacein $V$
%$$Map(hocolim_{\tau \in SOp^{Op_n}(T)}  \widetilde{G_{n,k}}(\tau),S) \leftarrow Map(I,S)$$ is a weak equivalence.
%So, we have 
Since both functors $\widetilde{G_{n}}$ and  $!^*(e)$  are pointwise cofibrant we have a  weak equivalence of homotopy colimits.
It remains to show that the canonical morphism
$$\hocolim_{{\bf SO}^{\tt O(n)}}!^*(e)\to e$$
is a weak equivalence. For this it is enough to prove that for any fibrant object $S\in \Ee$ the induced map of simplicial sets
 $$\widetilde{\Ee}(\hocolim_{{\bf SO}^{\tt O(n)}}!^*(e), S)\leftarrow \widetilde{\Ee}(e,S)$$ 
 is a weak equivalence.
Equivalently, we have to prove that for any fibrant $k$-truncated simplicial set $W$ the map
\begin{equation}\label{holim}\holim_{{\bf SO}^{\tt O(n)}}!^*(W) \leftarrow W\end{equation}
is a weak equivalence. Let $\Ss(-,-)$ be the internal hom in simplicial sets.  We have 
$$\Ss(N(\sr),W) \simeq \Ss(\hocolim_{{\bf SO}^{\tt O(n)}} !^*(1) , W) \simeq $$ $$ \simeq\holim_{{\bf SO}^{\tt O(n)}}!^*(\Ss(1,W)) = \holim_{{\bf SO}^{\tt O(n)}} !^*(W),$$
and the map (\ref{holim}) is induced by  (\ref{!})
so it is a weak equivalence since $N(!)$ is an $(n-2)$-equivalence and,  hence, $i$-equivalence for each $i\le n-2.$   So, we proved point (c) of the Theorem. 

The argument for (b) is identical but we don't need $\Ee$ to be truncated because the classifier of $\infty$-operads inside symmetric operads is contractible.

\end{proof}

\begin{corol}[Stabilisation for weak $k$-groupoids] \label{wkg} The  suspension functor induces an  equivalence between homotopy category of
 $n$-tuply monoidal weak $k$-groupoids  and $(n+1)$-tuply monoidal weak $k$-groupoids for $n\ge k+2.$ \end{corol}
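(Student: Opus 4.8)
The plan is to deduce the statement directly from part (c) of Theorem \ref{stab} by specializing the base category $\Ee$ appropriately. A model for weak $k$-groupoids is a symmetric monoidal model category $(\Ee, \times, *)$ whose homotopy category is equivalent to that of weak $k$-groupoids; concretely one may take $\Ee$ to be the category of $k$-truncated simplicial sets (spaces), with the cartesian product as tensor, or the Rezk model $\Theta_k Sp_k$ localized at the $k$-truncation. The unit object $* $ is cofibrant, so the hypotheses of Theorem \ref{stab} are met, and the whole discussion of Section \ref{stabilization} applies.

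The key observation to record first is that such an $\Ee$ is $k$-truncated in the sense defined before Theorem \ref{stab}: the homotopy function complex $\widetilde{\Ee}(X,Y)$ between two objects of a model category of $k$-truncated spaces has vanishing homotopy groups above degree $k$. (For the space model this is essentially by definition; for $\Theta_k Sp_k$ it follows from the description of mapping spaces in the Rezk model structure after $k$-truncation.) Next I would invoke the identification, discussed in the Introduction and made precise via Lemma \ref{algebra} and the remark following the definition of $B_n(\Ee)$, that an $n$-tuply monoidal weak $k$-groupoid is precisely an algebra of a cofibrant contractible $n$-operad $G_n$ in $\Ee$, i.e.\ an object of $B_n(\Ee)$; and that the suspension functor $S \colon Ord(n) \to Ord(n+1)$ induces, through the composite adjunction $i^* \dashv i_!$ constructed before Theorem \ref{stab}, exactly the "forget one layer of monoidal structure" passage from $(n+1)$-tuply monoidal to $n$-tuply monoidal weak $k$-groupoids.

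With these identifications in hand, part (c) of Theorem \ref{stab} says that for $n \ge k+2$ the adjunction between $B_n(\Ee)$ and $B_{n+1}(\Ee)$ is a Quillen equivalence, hence induces an equivalence on homotopy categories; passing to homotopy categories and reading the equivalence through the dictionary above yields the desired equivalence between the homotopy category of $n$-tuply monoidal weak $k$-groupoids and that of $(n+1)$-tuply monoidal weak $k$-groupoids. The main obstacle is not the formal deduction but the two bookkeeping points: (1) verifying that the chosen model $\Ee$ for weak $k$-groupoids really is $k$-truncated and carries a standard system of simplices (needed so that Theorem 8.2 of \cite{BB}, invoked in the proof of Theorem \ref{stab}, applies); and (2) checking that "algebra of a cofibrant contractible $n$-operad in $\Ee$" is a faithful model for "$n$-tuply monoidal weak $k$-groupoid" — this is precisely the philosophical stance taken in the Introduction, justified by the fact that $sym_n(G_n)$ is a cofibrant $E_n$-operad, so that $B_n(\Ee)$ models $E_n$-algebras in spaces, which are the $n$-fold loop-space-theoretic incarnation of $n$-tuply monoidal $k$-groupoids. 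Once these are granted the corollary is immediate.
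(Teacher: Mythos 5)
Your proposal is correct and follows essentially the same route as the paper: the paper's proof simply applies Theorem \ref{stab}(c) to the category $Sp_k$ of homotopy $k$-types (the $k$-truncation of simplicial sets with the Kan model structure), in which weak $k$-groupoids are the fibrant objects, exactly as you do with your first choice of $\Ee$. Your additional bookkeeping (cofibrant unit, standard system of simplices, $k$-truncatedness, and the identification of $n$-tuply monoidal weak $k$-groupoids with objects of $B_n(\Ee)$) is consistent with what the paper leaves implicit.
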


\begin{proof} We apply Theorem \ref{stab} to the category of homotopy $k$-types $Sp_k$  which is the $k$-truncation of the model category of simplicial sets $Sp =Set^{\Delta^{op}}$ with its Kan model structure \cite{cis06}.  Weak $k$-groupoids are fibrant objects in this category.    
\end{proof}

\begin{remark}\rm Corollary \ref{wkg} implies classical Freudental stabilisation theorem (cf. \cite{BD}). 
\end{remark}

Recall that Rezk's $(m+k,m)$-categories are fibrant objects in the model category $\Theta_m Sp_k, -2\le k\le \infty $ which is a %$(m+k)$ 
truncation of the model category of Rezk's complete  $\Theta_m$-spaces $\Theta_m Sp_{\infty}$ .  The category $\Theta_m Sp_{\infty}$ is itself a certain Bousfireld localisation of the category of simplicial presheaves $Sp^{\Theta_m^{op}}$ with its  injective model structure. 
This is a cartesian closed model category which is $(m+k)$-truncated and satisfies all hypothesis of Theorem \ref{stab}(see \cite{Rezk}).

\begin{defin} The  category of  Rezk's $n$-tuply monoidal $(m+k,m)$-categories is the category  of fibrant objects in the (semi)model category $B_n(\Theta_m Sp_k).$ 
\end{defin}

We immediately have

\begin{corol}[Stabilisation for Rezk's $(m+k,m)$-categories]\label{wkc} The  suspension functor induces an  equivalence between homotopy category of
Rezk's $n$-tuply monoidal $(m+k,m)$-categories  and Rezk's $(n+1)$-tuply monoidal $(m+k,m)$-categories for $n\ge m+k+2.$  
\end{corol}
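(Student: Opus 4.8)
The plan is to deduce Corollary \ref{wkc} directly from Theorem \ref{stab} applied to the base category $\Ee = \Theta_m Sp_k$, so the work is mostly in checking that this $\Ee$ satisfies the hypotheses of the theorem. First I would recall that $\Theta_m Sp_k$ is obtained from the injective model structure on simplicial presheaves $Sp^{\Theta_m^{\op}}$ by a left Bousfield localisation, so it is cofibrantly generated; it is a cartesian closed monoidal model category with tensor unit the terminal presheaf, which is cofibrant in any injective (hence in any localised) model structure; and by \cite{Rezk} it is $(m+k)$-truncated, i.e.\ $k$-truncated in the sense of the paragraph preceding Theorem \ref{stab} with the truncation parameter equal to $m+k$. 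So Theorem \ref{stab}(a) gives, for every $2\le n<\infty$, a commutative triangle of Quillen adjunctions between $B_n(\Theta_m Sp_k)$, $B_{n+1}(\Theta_m Sp_k)$ and $E_\infty(\Theta_m Sp_k)$, and Theorem \ref{stab}(c) upgrades this to a triangle of Quillen equivalences as soon as $n\ge (m+k)+2$.

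Next I would translate the Quillen equivalence into the statement of the corollary. By the Definition immediately preceding the corollary, the category of Rezk's $n$-tuply monoidal $(m+k,m)$-categories is the category of fibrant objects of the semimodel category $B_n(\Theta_m Sp_k)$, and its homotopy category is the homotopy category of $B_n(\Theta_m Sp_k)$. A Quillen equivalence of (semi)model categories induces an equivalence of homotopy categories, and under the suspension adjunction this equivalence is realised precisely by the derived functors of $i_!\dashv i^*$; tracing through the construction of $i$ in the paragraph before Theorem \ref{stab}, the left adjoint $i_!$ is the functor induced on algebras by the suspension $S\colon Op_n\to Op_{n+1}$, so the equivalence is indeed ``the suspension functor'' as claimed. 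Hence for $n\ge m+k+2$ the suspension induces an equivalence between the homotopy category of Rezk's $n$-tuply monoidal $(m+k,m)$-categories and that of Rezk's $(n+1)$-tuply monoidal $(m+k,m)$-categories.

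The only mildly delicate point, and the place I expect the ``main obstacle'' to hide, is verifying that $\Theta_m Sp_k$ genuinely meets every hypothesis of Theorem \ref{stab}: cofibrant generation and cofibrancy of the unit are standard for injective/localised structures, but one must be slightly careful with the truncation bookkeeping (the parameter in Theorem \ref{stab}(c) is the truncation level of $\Ee$, which for $\Theta_m Sp_k$ is $m+k$, not $k$), and one should note that $\Theta_m Sp_k$ is a localisation rather than a full presheaf model structure, so one invokes \cite{Rezk} (and \cite{BergnerRezk}) for the truncation and for the fact that it remains a cofibrantly generated monoidal model category after localisation. Part (b) of Theorem \ref{stab}, which would additionally require a standard system of simplices, is not needed here since we only use parts (a) and (c). Once these points are checked, the corollary follows with no further argument, which is why the statement is phrased as ``We immediately have''.
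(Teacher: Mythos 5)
Your proposal is correct and follows essentially the same route as the paper, which likewise just notes that $\Theta_m Sp_k$ is a cofibrantly generated cartesian closed monoidal model category with cofibrant unit that is $(m+k)$-truncated (citing \cite{Rezk}) and then reads off the corollary from Theorem \ref{stab} with truncation parameter $m+k$, exactly as you do. One small quibble: the ``in addition'' in part (c) of Theorem \ref{stab} refers to the hypothesis of part (b), i.e.\ the proof of (c) does use a standard system of simplices (it is needed to identify $\mathbb{L}sym_n(G_n)$ as a homotopy colimit over the classifier), so you cannot simply declare (b)'s hypothesis irrelevant --- but this is harmless here, since $\Theta_m Sp_k$, being a Bousfield localisation of a simplicial cartesian closed presheaf category, does admit a standard system of simplices, which is part of what the paper means by ``satisfies all hypotheses of Theorem \ref{stab}''.
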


\begin{remark} If $m=0$ the category  $\Theta_0 Sp_k$ is isomorphic (as a cartesian model category) to the category $Sp_k$ (cf. \cite{Rezk}) and so  the Corollary \ref{wkc} is a particular case of Corollary \ref{wkg}.

If $k=0$ the fibrant objects of the category $\Theta_m Sp_m$ are weak $m$-categories and so we proved classical Baez-Dolan Stabilization Hypothesis for Rezk $m$-categories.

\end{remark}

\begin{remark} \rm %The proof of  Theorem \ref{stab} is essentially formal. We only need to know that there is a commutative triangle of Quillen adjunctions  (\ref{dessym})  and that  the nerve of the classifier $\sr$ has the homotopy type of configuration spaces. So, it  works equally well if we replace vertical suspension by  any other other suspension functor from $Ord(n)$ to $Ord(n+1).$ This reflects the fact that 

The choice of the suspension functor amounts to the choice of a  multiplicative structure on an algebras from $B_{n+1}(\Ee)$ which we would like to `forget'.  Theorem \ref{stab} asserts that up to homotopy this choice in stable dimensions is not important. \end{remark}

\begin{remark} The argument of the Theorem \ref{stab} works equally well for the Swiss-Cheese type symmetric and $n$-operads \cite{SymBat}. The stabilisation result amounts then to the stable version of the Swiss-Cheese conjecture of Kontsevich, cf. \cite{Kon}.\end{remark}

\begin{remark}
Another conclusion from the proof of the Theorem \ref{stab} is that some interesting results about equivalence of homotopy categories of algebras can be proved  once we have a map of polynomial monads $\phi:S\to T$ such that  the classifier $\HS$ is aspherical  with respect to a fixed fundamental localiser $\mathcal W$ \cite{cis06}. We hope to make use of this observation in a future.\end{remark}

\noindent {\bf Acknowledgements.}    I  wish to express my  gratitude to
 C. Berger, D.-C.Cisinski, E. Getzler, R. Haugseng, A. Joyal, S. Lack, M. Markl, R. Street,  M. Weber, D. White for many useful discussions. I am also grateful to the referee for many suggestions which allow to improve the presentation of the result.

The  author also  gratefully acknowledges  the financial
support of Scott Russel Johnson Memorial Foundation, Max Planck
Institut f\"{u}r Mathematik and Australian Research Council (grants
No.~DP0558372, No.~DP1095346). 

\renewcommand{\refname}{Bibliography.}

\

\

\noindent{\small\sc Macquarie University, \\
North Ryde, 2109 Sydney, Australia.} \\  \hspace{2em}\emph{E-mail:}
michael.batanin$@$mq.edu.au\vspace{1ex}

\


\begin{thebibliography}{99}

%\bibitem{AM}Ara D., Maltsiniotis G., {\it Towards a Thomason model structure on the category of strict n-categories,} Adv. Math. \textbf{259} (2014), 557--654.

\bibitem{BD}Baez J., Dolan J.,{\it  Higher-dimensonal algebra and topological quantum field theory,} Journal Math. Phys. \textbf{36} (1995), 6073--6105.

%\bibitem{BD2}Baez J., Dolan J., {\it  Higher-dimensonal algebra III, $n$-categories and the algebra of opetopes,} Adv. Math. \textbf{135} (1998), 145--206.

%\bibitem{Barwick}Barwick C., {\it On left and right model categories and left and right Bousfield localizations,} Homology, Homotopy and Applications \textbf{12} (2010), 245--320.

\bibitem{SymBat}Batanin M. A., {\it The symmetrization of $n$-operads and compactification of real configuration spaces,} Adv. Math. \textbf{211} (2007), 684--725.

\bibitem{EHBat}Batanin M. A., {\it The Eckmann-Hilton argument and higher operads,}  Adv. Math. \textbf{217} (2008), 334--385.

\bibitem{LocBat}Batanin M. A., {\it Locally constant $n$-operads as higher braided operads,}  {J. of  Noncommutative Geometry} \textbf{4} (2010), 237--265.

%\bibitem{BatBer}Batanin M. A., Berger C., {\it The lattice path operad and Hochschild cochains,}  {Contemp. Math.} \textbf{504} (2009), 23--52.

\bibitem{BB} Batanin M.A., Berger C., {\it Homotopy theory of algebras over polynomial monads,}   arXiv:1305.0086.
%\bibitem{BBC}Batanin M. A., {\it Higher braided operads and  stabilisation hypothesis}, in preparation.

\bibitem{duoid} Batanin M.A., Markl M., {\it Operadic categories and duoidal Deligne's conjecture,} Advances in Mathematics  , pp. 1630-1687, 2015.
%\bibitem{BatWeb}Batanin M. A., Weber M., {\it Structured colimits of algebras,} in preparation.

%\bibitem{BJT} Baues, H.-J., Jibladze, M., Tonks, A. {\it Cohomology of monoids in monoidal categories},  Contemp. Math., {\bf 202} (1997), 137--165.

%\bibitem{Benabou}Benabou J., {\it Introduction to bicategories}, Lect. Notes Math., Springer Verlag \textbf{1967}, 1--77.

%\bibitem{Berger} Berger C., {\it Combinatorial models for real configuration spaces and $E_n$-operads}, {\it Contemp. Math.} \textbf{202} (1997), 37-52.

%\bibitem{Berger1} Berger C.,{\it Op\'erades cellulaires et espaces de lacets it?er\'es,} Ann. Inst. Fourier (Grenoble) \textbf{46} (1996), 1125Ð1157.

\bibitem{BergerMoerdijk}Berger C., Moerdijk I., {\it Axiomatic homotopy theory for operads,}  Comment. Math. Helv. \textbf{78} (2003), 805--831.

\bibitem{BergerMoerdijk0}Berger C., Moerdijk I., {\it The Boardman-Vogt resolution of operads in monoidal model categories}, Topology \textbf{45} (2006), 807--849.

%\bibitem{BergerMoerdijk1}Berger C., Moerdijk I., {\it Resolution of coloured operads and rectification of homotopy algebras,} Contemp. Math. \textbf{431} (2007), 31--58.

%\bibitem{BergerMoerdijk2}Berger C., Moerdijk I., {\it On the homotopy theory of enriched categories}, Q. J. Math. \textbf{64} (2013), 805--846.

%\bibitem{Borisov-Manin}Borisov D., Manin Y.,  {\it Generalized operads and their inner cohomomorphisms,}
%Geometry and Dynamics of Groups and spaces (In memory of Aleksander Reznikov). Ed. by M. Kapranov et al. Birkhauser, Boston, Progress in Math., vol. \textbf{265} (2008), 247--308.

\bibitem{BergnerRezk} Bergner J., Rezk C., {\it Comparison of models for $(\infty,n)$-categories, II},  arXiv:1406.4182.

\bibitem{breen} Breen L., {\it  On the classification of $2$-gerbs and $2$-stacks}, Ast\'erisque {bf 225,} Soc. Math. de France (1994).

\bibitem{Cheng} Cheng E., Gurski N., {\it The periodic table of n-categories for low dimensions I: degenerate categories and degenerate bicategories}, Contemp. Math. \textbf{431} (2007), 143--164.

\bibitem{cis06} Cisinski D.-C., {\it Les prefaisceaux comme modeles des types d'homotopie,}   {Ast\'{e}risque, Soc. Math. France}, {\bf 308}, 2006.
%\bibitem{cis}Cisinski D.-C., {\it Locally constant functors,} Math. Proc. Camb. Phil. Soc. \textbf{147} (2009), 593--614.



%\bibitem{CM}Cisinski D.-C., Moerdijk I., {\it Dendroidal sets and simplicial operads}, J. Topology \textbf{6} (2013), 705--756.

%\bibitem{Cos}Costello K,  {\it The $A_{\infty}$-operads and the moduli space of curves},  {\tt arXiv:0402015v2}.

%\bibitem{Dieck}tom Dieck T., {\it Algebraic Topology}, Textbooks in Mathematics, European Mathematical Society, Zürich, 2008.

%\bibitem{DOR}Dundas B. I. , {\O}stv{\ae}r P.-A., R\"ondigs O. {\it Enriched Funtors and Stable Homotopy Theory}, Documenta Math. \textbf{8} (2003), 409--488.

\bibitem{Fresse} Fresse B., {\it Modules over operads and functors,}  Lecture Notes in Mathematics {\bf 1967}, Springer Verlag, 2009, 318 pages. 
%\bibitem{Fressebook} Fresse B., {\it Homotopy of Operads \& Grothendieck-Teichmomuller groups}, Book project available at {\it  http://math.univ-lille1.fr/~fresse/OperadHomotopyBook/OperadGT-December2012Preprint.pdf}.

\bibitem{FV} Fiedorowicz Z., Vogt R., {\it An Additivity Theorem for the Interchange of $E_n$- Structures,} arXiv:1102.1311.



%\bibitem{GK}Gambino N., Kock J., {\it Polynomial functors and polynomial monads}, Math. Proc. Cambridge Philos. Soc. \textbf{154} (2013), no. 1, 153--192.


%\bibitem{GJ}Getzler E., Jones J. D. S., {\it Operads, homotopy algebras and iterated integrals for double loop spaces,} hep-th/940355.

%\bibitem{GetzK}Getzler E., Kapranov M., {\it Modular operads,} {  Compositio Math.} \textbf{110}, 65--126. 1998


%\bibitem{G}Giansiracusa J., {\it The framed little $2$-discs operad and diffeomorphisms of hadlebodies,}  J. Topology. \textbf{4} (2011), 919--941.

%\bibitem{Goerss}Goerss P., Jardine J. F., {\it Simplicial homotopy theory}, Progress in Mathematics, reprint of the 1999 edition, Modern Birkhuser Classics. Basel, 2009.

%\bibitem{GPS}  Gordon R., Power A.J., Street R., {\it Coherence for Tricategories, }{  Memoirs of the AMS,} {\bf 117},  1995.

%\bibitem{JY}Johnson M. W., Yau D., {\it A Foundation for PROPs, Algebras, and Modules,} Math. Surveys and Monographs, Volume \textbf{203}, Amer. Math. Soc. 2015.

%\bibitem{JK}Joyal A., Kock J.,  {\it Feynman graphs, and nerve theorem for compact symmetric multicategories,} {\tt arXiv:0909.2675v1}

%\bibitem{JS}Joyal A., Street R., {\it The geometry of tensor calculus, I }, Adv. in Math. \textbf{88}, 55--112 (1991).

%\bibitem{Harper}Harper J., {\it Homotopy theory of modules over operads and non-$\Sigma$ operads in monoidal model categories}, J. Pure Appl. Algebra \textbf{214} (2010), 1407--1434.

\bibitem{GH} Gepner D., Haugseng R., {\it Enriched $\infty$-categories via nonsymmetric $\infty$-operads}, {\it Advances in Math.}, \textbf{279}, (2015), pp. 575-716. 

\bibitem{Hirschhorn}Hirschhorn P., {\it Model Categories and their Localizations}, Math. Surveys Monogr., vol. \textbf{99}, Amer. Math. Soc., Providence, RI, 2003.

%\bibitem{Hovey}Hovey M., {\it Model Categories}, Math. Surveys Monogr., vol. \textbf{63}, Amer. Math. Soc., 1999.

%\bibitem{Kock}Kock J., {\it Polynomial functors and trees},  Int. Math. Res. Not. \textbf{3} (2011), 609--673.

%\bibitem{KJBM}Kock J., Joyal A., Batanin M., Mascari J.-F., {\it Polynomial functors and opetopes,} Adv. Math. \textbf{224} (2010), 2690--2737.

\bibitem{Kon}Kontsevich M., {\it Operads and motives in deformation quantisation,} Lett. Math. Phys.  \textbf{48} (1999),  35--72.

\bibitem{Lack}Lack S., {\it A Quillen model structure for $2$-categories,} K-theory \textbf{26} (2002), 171--205.

%\bibitem{Lack2}Lack S., {\it Codescent objects and coherence}, J. Pure Appl. Alg. \textbf{175} (2002), 223--241.

%\bibitem{Leinster}Leinster T., {\it Higher Operads, Higher Categories}, London Math. Soc. Lecture Notes, Cambridge University Press, Cambridge, 2004.

%\bibitem{Loday}Loday J.-L., Vallette B., {\it Algebraic Operads}, Springer-Verlag Berlin Heidelberg, A series of Comp. Studies in Math., 2012.

%\bibitem{Lurie}Lurie J., {\it Higher Topos Theory}, Annals of Mathematics Studies \textbf{170}, Princeton University Press, Princeton, NJ, 2009.

\bibitem{HA}  Lurie J., {\it Higher algebra,} available at  {\it http://www.math.harvard.edu/~lurie/} 2014.
%\bibitem{MacLane}Mac Lane S., {\it Categories for working mathematicians.}

%\bibitem{Markl}Markl M., {\it Operads and {PROP}'s }, in Elsevier, Handbook of Algebra vol. \textbf{5} (2008), 87--140.

%\bibitem{Marklbad} Markl M., {\it An $E_{\infty}$-extension of the associahedra and the Tamarkin cell mystery}, J. Geom. Phys. \textbf{98} (2015), 446--471.

%\bibitem{Merk}Markl M.,  Merkulov S., Shadrin S., {\it Wheeled PROPs, graph complexes and master equation}, J. Pure Appl. Algebra \textbf{213} (2009), 496--535.

\bibitem{May}May J. P., {\it The geometry of iterated loop spaces},  Lect. Notes Math. \textbf{271}, Springer-Verlag, Berlin, 1972.

%\bibitem{MayPonte}May J. P.,  Ponte K., \emph{More Concise Algebraic Topology: Localization, completion and model categories},  Chicago Lect. Notes Math. 2012.

%\bibitem{MerVal}Merkulov S., Vallette B., {\it Deformation theory of representations of prop(erad)s I}, Journal f\"ur die reine und angewandte Mathematik (Crelle) \textbf {634} (2009), 51--106.

%\bibitem{MP}Moerdijk I., Palmgren E., {\it Wellfounded trees in categories,} Annals of Pure and Applied Logic \textbf{104} (2000), 189-218.

%\bibitem{FM}Muro F., {\it Homotopy theory of non-symmetric operads}, Algebraic $\&$ Geometric Topology \textbf{11} (2011), 1541--1599.

%\bibitem{FM2}Muro F., {\it Homotopy theory of non-symmetric operads II: change of base category and left properness}, Algebraic $\&$ Geometric Topology \textbf{14} (2014), 229--281.

%\bibitem{Quillen}Quillen D.G., {\it Homotopical algebra}, Lecture Notes in Mathematics, No. 43, Springer-Verlag, Berlin, 1967.

%\bibitem{Rezk1}Rezk, C., {\it Fibrations and homotopy colimits of simplicial sheaves}, {\tt arXiv:math.AT/9811038}.

\bibitem{Rezk}Rezk, C., {\it A cartesian presentation of weak $n$-categories.}  Geometry $\&$ Topology \textbf{14} (2010), 521--571.

%\bibitem{Zav}Szawiel S., Zawadowski M., \emph{Theories of analytic monads},
%Math. Structures Comput. Sci. \textbf{24}(6) (2014), 33 pp.
\bibitem{Simpson} Simpson C., {\it On the Breen-Baez-Dolan stabilization hypothesis for TamsamaniÕs weak n-categories,} arXiv:math/9810058. 

\bibitem{Sp}Spitzweck M., {\it Operads, Algebras and Modules in Model Categories and Motives}, PhD thesis, Bonn, 2001.

%\bibitem{Schwede}Schwede S., {\it An untitled book project about symmetric spectra}, homepage of Stefan Schwede.

%\bibitem{SS}Schwede S., Shipley B. E., {\it Algebras and modules in monoidal model categories}, Proc. London Math. Soc. \textbf{3} (2000), 491--511.

%\bibitem{T}Tambara D., {\it On multiplicative transfer}, Comm. Alg. \textbf{21} (1993), 1393--1420.

%\bibitem{V}Vallette B., {\it Koszul duality for props}, Trans. Amer. Math. Soc. \textbf{359} (2007), 4865--4943.

\bibitem{W}Weber M., {\it Internal algebra classifiers as codescent objects of crossed internal categories}, {\tt arXiv:1503.07585}.

\bibitem{W2}Weber M., {\it Algebraic Kan extensions along morphisms of internal algebra classifiers}, {\tt arXiv:1511.04911v2}.

\bibitem{WY} White D., Yau D., {\it  Bousfield Localization and Algebras over Colored Operads}, arXiv:1503.06720.
\end{thebibliography}
\end{document}